\documentclass[12pt]{amsart}
\usepackage{latexsym,enumerate}
\usepackage{amssymb, xcolor,mathrsfs}
\usepackage{amsmath,amsthm,amsfonts,amssymb,latexsym, mathabx}
\usepackage[pagebackref]{hyperref}

\headheight=7pt
\textheight=574pt
\textwidth=432pt
\topmargin=14pt
\oddsidemargin=18pt
\evensidemargin=18pt

\newtheorem{theorem}{Theorem}[section]
\newtheorem{lemma}[theorem]{Lemma}

\theoremstyle{definition}

\newcommand{\C}{\mathbb{C}}

\newcommand{\la}{\langle}
\newcommand{\ra}{\rangle}

\newcommand{\ol}{\overline}

\newcommand{\Mod}[1]{\ (\mathrm{mod}\ #1)}

\DeclareMathOperator{\Syl}{Syl}

\DeclareMathOperator{\Irr}{Irr}

\usepackage{hyperref}

\begin{document}
	
\title[Burnside's Normal $p$-complement Theorem]
{A Block-theoretic Proof of Burnside's Normal $p$-complement Theorem}

	\author[Christopher Herbig]{Christopher Herbig}
	\address{%
		Northern Illinois University\\
		Department of Mathematical Sciences\\
		Dekalb, IL 60115\\
		USA}
	\email{cherbig@niu.edu}

	\subjclass[2020]{Primary 20C15, 20C20}
	\keywords{principal block, block theory, characters, finite groups}

	
	\begin{abstract} In \cite[Theorem 6.7B]{DixPut}, the authors use the Main Theorems of Brauer to give a proof of Burnside's Normal $p$-complement Theorem. Unfortunately, the proof contains an error. We take this opportunity to give a proof along similar lines, circumventing the error by means of a well-known result on traces of totally positive cyclotomic integers.
	\end{abstract}
	
	\maketitle
	
	
	
	\section{Introduction}
	
	Burnside's Normal $p$-complement Theorem is typically obtained via transfer theory. In \cite[Theorem 6.7B]{DixPut}, the authors use the Second and Third Main Theorems of Brauer to give a block-theoretic proof. Unfortunately, the proof contains an error which appears to have gone unnoticed in the literature to date. We complete the proof by applying a result of Siegel from \cite[Theorem III]{Sieg}, which states that the average among the Galois conjugates of a totally positive algebraic integer $\alpha$ is at least $3/2$ if $\alpha \neq 1$. John G. Thompson famously used Siegel's theorem to show that an irreducible character of a finite group is either zero or a root of unity on at least a third of the group elements (cf. \cite[Exercise 3.15]{Is}). For cyclotomic integers, Cassels provides a simpler proof of Siegel's theorem in \cite[Lemma 2]{Cass} which will be suitable for our purposes. 
	
	We begin by stating Burnside's Normal $p$-complement Theorem.
	\begin{theorem}\label{T:1}
		\textup{(Burnside)} Let $G$ be a finite group. If $C_G(P) = N_G(P)$ for $P \in \Syl_{p}(G)$, then $G$ has a normal $p$-complement.
	\end{theorem}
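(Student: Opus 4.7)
I plan to prove Theorem~\ref{T:1} by induction on $|G|$, following the block-theoretic template of \cite{DixPut} and closing the gap by means of Cassels's bound on totally positive cyclotomic integers. The hypothesis $C_G(P) = N_G(P)$ immediately forces $P \subseteq C_G(P)$, so $P$ is abelian; since $P$ is then central in $N_G(P)$, Schur--Zassenhaus yields $N_G(P) = P \times H$ for some $p'$-group $H$. A short inductive reduction via the quotient $G/\la u\ra$ for $u \in Z(G) \cap P$, in which the hypothesis is visibly preserved, allows me to assume $Z(G) \cap P = 1$, so that $C_G(u) \lneq G$ for every $1 \neq u \in P$. For such $u$ I verify the hypothesis is inherited: $H \subseteq C_G(P) \subseteq C_G(u)$ gives $N_G(P) \subseteq C_G(u)$, whence $N_{C_G(u)}(P) = N_G(P) = C_G(P) = C_{C_G(u)}(P)$. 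Induction then supplies a normal $p$-complement in $C_G(u)$, and in particular the principal block $B_0(C_G(u))$ has a unique irreducible Brauer character and its Cartan invariant equals $|P|$.

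Next I bring in the Main Theorems of Brauer. The Third Main Theorem identifies $B_0(C_G(u))$ as the unique Brauer correspondent of $B_0(G)$ in $C_G(u)$; combining with the Second Main Theorem yields, for every $\chi \in \Irr(B_0(G))$ and every $1 \neq u \in P$,
\[
\chi(u) \;=\; d^u_{\chi,\, 1_{C_G(u)}} \;\in\; \mathbb{Z}[\zeta_{|u|}],
\]
and the orthogonality of generalized decomposition numbers then produces the identity
\[
\sum_{\chi \in \Irr(B_0(G))} |\chi(u)|^2 \;=\; |P|, \qquad 1 \neq u \in P.
\]

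The final step---where the proof of \cite[Theorem~6.7B]{DixPut} breaks down---is to extract from this identity the conclusion $|\mathrm{IBr}(B_0(G))| = 1$, which by a standard criterion is equivalent to $G$ having a normal $p$-complement. Here I invoke Cassels's refinement of Siegel's theorem \cite[Lemma 2]{Cass}: since $|\chi(u)|^2$ is a totally positive cyclotomic integer, either $|\chi(u)|^2 \in \{0,1\}$ or the average of its Galois conjugates is at least $3/2$. Summing the displayed identity over a full Galois orbit of $u$ produces a mean value of $1$, and the $3/2$ bound leaves no room for the exotic case; a careful accounting forces $|\chi(u)|^2 \in \{0,1\}$ for every $\chi$ and $u$, so that each $\chi \in \Irr(B_0(G))$ restricts to $P$ as a single irreducible character and $|\mathrm{IBr}(B_0(G))| = 1$ follows. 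The main obstacle will be precisely this last Siegel--Cassels input: without the $3/2$ bound the orthogonality identity is consistent with values of $|\chi(u)|^2$ in the interval $(0,1)$, and the argument cannot be closed.
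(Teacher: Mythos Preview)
Your reduction (abelian $P$, $Z(G)\cap P=1$, normal $p$-complement in each $C_G(u)$) matches the paper's Lemmas~\ref{2:2}, \ref{2:5}, \ref{2:6}, and your identity
\[
\sum_{\chi\in\Irr(B_0)}|\chi(u)|^2=|P|\qquad(1\neq u\in P)
\]
via the Second and Third Main Theorems is correct. But the Siegel--Cassels step, as you outline it, does not close. The ``mean value $1$'' you want after averaging over a Galois orbit of $u$ is $|P|/k(B_0)$; this equals $1$ only if $k(B_0)=|P|$, i.e.\ $l(B_0)=1$, which is exactly the conclusion you are after (recall $k(B_0)=l(B_0)+|P|-1$ here since each nonidentity $p$-section contributes one Brauer character). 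Without that, zeros among the $|\chi(u)|^2$ can absorb the Siegel excess: partitioning $\Irr(B_0)$ according to whether the orbit-sum $T_\chi=\sum_{u'}|\chi(u')|^2$ is $0$, equals $|O|$, or is $\ge\tfrac32|O|$ gives only $|C|\le 2|A|$ (exotic values are at most twice the number of vanishing characters), which is no contradiction. Your final clause is also unjustified: $|\chi(u)|^2\in\{0,1\}$ on $P^{\#}$ does \emph{not} force $\chi_P$ to be a single (linear) irreducible of the abelian group $P$, since that would require $|\chi(u)|=1$ for all $u$ including $u=1$; and even granting irreducible restrictions, you have not explained why $l(B_0)=1$ follows.

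The paper avoids this by summing in the orthogonal direction. It fixes a single nontrivial height-zero $\zeta\in\Irr(B_0)$, so that $p\nmid\zeta(1)$ forces $\zeta(z)\neq0$ for every $p$-element $z$ (Lemma~\ref{2:4}); thus every term of $\sum_{z\in P^{\#}}|\zeta(z)|^2$ is \emph{totally positive}, and Siegel applies without the zero-value loophole. The bound $\sum_{z\in P^{\#}}|\zeta(z)|^2\le|P|$ comes not from block orthogonality but from $\langle\zeta,\zeta\rangle_G=1$ via Lemma~\ref{2:3}, and Siegel pins the sum to $|P|-1$ with $\zeta$ taking root-of-unity values on $P^{\#}$. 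A second application of Lemma~\ref{2:3} to $\langle\zeta,1_G\rangle_G$ together with Cauchy--Schwarz then forces $\zeta\equiv1$ on $G^\circ$, contradicting $O^p(G)=G$. If you want to salvage your column-sum approach, the missing ingredient is precisely a mechanism to exclude $\chi(u)=0$; the paper's choice of a height-zero character is exactly such a mechanism.
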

	\noindent The argument in \cite{DixPut} proceeds by choosing an arbitrary, irreducible ordinary character $\zeta$ from the principal $p$-block of a minimal counterexample. It is then shown that such a $\zeta$ must satisfy the relation
	
	\begin{equation}\label{E:1}
		\dfrac{1}{|G|}\sum_{g \in G} |\zeta(g) - \zeta(1)1_G|^2 \;\; \geq \;\; \dfrac{1}{|P|}\sum_{g \in P} |\zeta(g) - \zeta(1)1_G|^2
	\end{equation}
	for $P \in \Syl_p(G)$. It is also shown that if equality holds above, then each $\zeta$ in the principal block must contain $O^p(G)$, which leads to a contradiction. Now, the left side of \eqref{E:1} is equal to $1 + \zeta(1)^2$. However, the right side is strictly less than $1 + \zeta(1)^2$ if the restriction $\zeta_P$ has the trivial character as a constituent. 
	
	Using Siegel's theorem, we show that if $\zeta$ has height zero, then $\zeta$ must take root of unity values on each nonidentity $p$-element of $G$, and from this, we obtain a contradiction.
	
	\section{Preliminary Lemmas}

	We introduce some notation. $G$ is taken to be a minimal counterexample with $P \in \Syl_p(G)$. The principal $p$-block will be denoted as $B_0$, and $\Irr(B_0)$ will denote the set of all irreducible ordinary characters in $B_0$. For any group $H$, the set $H - \{1\}$ will be denoted as $H^{\#}$, and the set of all $p$-regular elements of $H$ will be denoted as $H^\circ$. For any two $\C$-class functions $\psi$ and $\eta$ of $G$, we let $\langle \psi , \eta \rangle_G$ denote the usual inner product of characters. Otherwise, our notation follows that of \cite{Is}.
	
	Our first lemma is a result of Brauer (cf. \cite[Corollary 5]{Brau}).
	
	\begin{lemma}\label{2:1}
		Let $\psi$ be a class function of $G$ such that each constituent lies in the principal block of $G$, and let $z$ be a $p$-element such that $H := C_G(z)$ has a normal $p$-complement, then for each $p$-regular element of $H$, we have $\psi(zy) = \psi(z)$.
	\end{lemma}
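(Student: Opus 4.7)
The plan is to combine Brauer's Second and Third Main Theorems with the structural fact that the principal block of a group having a normal $p$-complement has a unique irreducible Brauer character.

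By linearity it suffices to prove $\chi(zy) = \chi(z)$ for a single $\chi \in \Irr(B_0)$. For a $p$-regular $y \in H$, Brauer's Second Main Theorem expresses
$$\chi(zy) \;=\; \sum_{b}\sum_{\varphi} d^{z}_{\chi,\varphi}\,\varphi(y),$$
where $b$ ranges over blocks of $H$ with $b^G = B_0$, $\varphi$ ranges over the irreducible Brauer characters of $b$, and the $d^{z}_{\chi,\varphi}$ are the attached generalized decomposition numbers. Brauer's Third Main Theorem then collapses the outer sum to the single block $b = B_0(H)$.

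Next I would identify the irreducible Brauer characters of $B_0(H)$. Writing $H = K \rtimes Q$ with $K = O_{p'}(H)$ and $Q \in \Syl_p(H)$, a standard block-theoretic analysis (via Fong's theorem, or central primitive idempotents and $Q$-orbits on $\Irr(K)$) identifies $\Irr(B_0(H))$ with the set of inflations of characters of $H/K \cong Q$. Since $Q$ is a $p$-group, its only irreducible Brauer character is trivial, so $B_0(H)$ admits a unique irreducible Brauer character, namely the restriction of $1_H$ to $H^{\circ}$.

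Substituting back, $\chi(zy) = d^{z}_{\chi,1_H}$, a scalar independent of the $p$-regular element $y$. Evaluating at $y = 1$ identifies this scalar as $\chi(z)$, so $\chi(zy) = \chi(z)$; taking the appropriate linear combination yields the lemma for arbitrary $\psi$. The only mildly technical step is the identification of $\Irr(B_0(H))$ with the inflations from $H/K$, but this is a clean consequence of the normal $p$-complement hypothesis and I do not expect it to present a serious obstacle.
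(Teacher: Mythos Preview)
Your argument is correct. Note, however, that the paper does not actually prove this lemma: it simply attributes the result to Brauer and cites \cite[Corollary~5]{Brau}. Your approach via Brauer's Second and Third Main Theorems together with the fact that $B_0(H)$ has only the trivial irreducible Brauer character is exactly the standard route, and is in the spirit of Brauer's original argument as well as of the Dixon--Puttaswamaiah proof that the present paper is revisiting.

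One small streamlining: the detour through the ordinary characters $\Irr(B_0(H))$ is not needed. Once you know the principal block idempotent of $kH$ is $e_0 = |K|^{-1}\sum_{k\in K}k$, any simple $kH$-module in $B_0(H)$ has $K$ acting trivially (by Clifford's theorem and the fact that $K$ is a $p'$-group), hence is a simple $kQ$-module, hence is trivial. This gives the unique irreducible Brauer character in $B_0(H)$ directly, without first describing the ordinary constituents.
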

	
	\begin{lemma}\label{2:2}
		Under the hypothesis of Theorem \ref{T:1}, $A^g = B$ implies $A = B$ for any $A,B \subseteq P$ and $g \in G$.
	\end{lemma}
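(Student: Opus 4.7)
The plan is to exploit the hypothesis $C_G(P) = N_G(P)$ in two stages: first to observe that it forces $P$ to be abelian, and second to reduce $G$-conjugacy of subsets of $P$ to $N_G(P)$-conjugacy via a Sylow argument inside $C_G(A)$; the hypothesis then makes $N_G(P)$-conjugacy trivial on subsets of $P$.

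For the first stage, I would note that $P \subseteq N_G(P) = C_G(P)$ forces every element of $P$ to centralize $P$, so $P$ is abelian. In particular, for any $A \subseteq P$, we have $P \leq C_G(A)$, and since $|P|$ is already the full $p$-part of $|G|$, $P$ is automatically a Sylow $p$-subgroup of $C_G(A)$.

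For the second stage, suppose $A, B \subseteq P$ with $B = A^g$. Abelianness applied to $B$ gives $P \leq C_G(B)$, and conjugating by $g^{-1}$ places $P^{g^{-1}} = gPg^{-1}$ inside $gC_G(B)g^{-1} = C_G(A)$. Thus $P$ and $P^{g^{-1}}$ are two Sylow $p$-subgroups of $C_G(A)$, and Sylow's theorem applied inside $C_G(A)$ produces $c \in C_G(A)$ with $cPc^{-1} = gPg^{-1}$, i.e., $n := c^{-1}g \in N_G(P)$. Because $c$ centralizes $A$, a direct calculation gives $a^g = a^{cn} = a^n$ for every $a \in A$, so $B = A^n$.

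To finish, the hypothesis $N_G(P) = C_G(P)$ forces $n$ to centralize every element of $P$, hence $A^n = A$, and combining the two equalities gives $A = B$. The only step that goes beyond routine Sylow bookkeeping is the opening observation that $P$ is abelian; once that is in hand, the fusion of subsets of $P$ is forced to be trivial.
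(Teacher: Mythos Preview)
Your argument is correct and is essentially the standard Sylow-inside-the-centralizer proof of Burnside's Fusion Theorem, specialized to the abelian case that the hypothesis $P\subseteq N_G(P)=C_G(P)$ forces. The paper's proof simply invokes Burnside's Fusion Theorem by name, so you have unpacked exactly the result it cites; the approaches coincide in substance, with yours being self-contained rather than a one-line citation.
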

	
	\begin{proof}
		This is a consequence of Burnside's Fusion Theorem.
	\end{proof}
	
	The following lemma is a mild generalization of \eqref{E:1}.
	\begin{lemma}\label{2:3}
		Assume that $C_G(z)$ has a normal $p$-complement for all nonidentity $p$-elements of $G$. If $\psi$ and $\eta$ are any class functions of $G$ such that $\psi(zy) = \psi(z)$ and $\eta(zy) = \eta(z)$ for each nonidentity $p$-element $z$ and $y \in C_G(z)^\circ$, then 
		
		\begin{equation}\label{E:2}
			|G|\langle \psi , \eta \rangle_G = \sum_{g \in G^\circ} \psi(g)\ol{\eta(g)} + |G:P|\sum_{z \in P^{\#}} \psi(z)\ol{\eta(z)}.
		\end{equation}		
		
	\end{lemma}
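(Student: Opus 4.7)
The plan is to expand the left side using the definition of the inner product and partition $G$ according to whether elements are $p$-regular. Since $|G|\langle \psi, \eta\rangle_G = \sum_{g \in G}\psi(g)\overline{\eta(g)}$, and the $p$-regular piece already appears as the first sum on the right of \eqref{E:2}, the work reduces to showing
\[\sum_{g \in G \setminus G^\circ} \psi(g)\overline{\eta(g)} = |G:P|\sum_{z \in P^\#} \psi(z)\overline{\eta(z)}.\]

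To handle the non-$p$-regular part, I would use the Jordan decomposition: every $g \in G\setminus G^\circ$ is written uniquely as $g = zy$ with $z$ a nonidentity $p$-element and $y \in C_G(z)^\circ$. The hypothesis $\psi(zy)=\psi(z)$ and $\eta(zy) = \eta(z)$ then collapses each summand to $\psi(z)\overline{\eta(z)}$, and summing over $y$ contributes a factor of $|C_G(z)^\circ|$. Grouping the nonidentity $p$-elements $z$ by $G$-conjugacy, Lemma \ref{2:2} (which follows from Burnside's Fusion Theorem) guarantees that each $G$-conjugacy class of nonidentity $p$-elements contains a unique representative from $P^\#$, and since $\psi, \eta$ are class functions, this turns the sum into one indexed by $z \in P^\#$ with weight $(|G|/|C_G(z)|) \cdot |C_G(z)^\circ|$.

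The key computation is then to identify this weight as $|G:P|$. Here I would use two ingredients: first, since $C_G(z)$ has a normal $p$-complement (which consists exactly of the $p$-regular elements of $C_G(z)$), $|C_G(z)^\circ|$ equals $|C_G(z)|/|P_z|$ where $P_z$ is a Sylow $p$-subgroup of $C_G(z)$. Second, the Theorem \ref{T:1} hypothesis $C_G(P)=N_G(P) \supseteq P$ forces $P$ to be abelian, so $P \leq C_G(z)$ for each $z \in P$; comparing orders then gives $P_z = P$. Hence the weight becomes $|G|/|P| = |G:P|$, and summing yields the desired identity.

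The main obstacle is the bookkeeping step of converting a sum over all nonidentity $p$-elements of $G$ into a sum over $P^\#$ with the correct multiplicity; this is where Lemma \ref{2:2} (no fusion of distinct elements of $P$) and the observation that $P$ is abelian combine cleanly to produce a uniform factor $|G:P|$. Once those two facts are in place the calculation is routine, and the hypothesis that $\psi, \eta$ are constant along $p$-regular cosets of each $\langle z \rangle$ is used exactly once, to perform the inner sum over $y \in C_G(z)^\circ$.
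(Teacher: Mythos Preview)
Your proposal is correct and follows essentially the same route as the paper: Jordan decomposition, collapse via the hypothesis on $\psi$ and $\eta$, count $|C_G(z)^\circ|$ using the normal $p$-complement, and reindex over $P^\#$ via Lemma~\ref{2:2}. The only difference is cosmetic: you make explicit that $P$ is abelian (hence $P\leq C_G(z)$ for $z\in P$, so $|C_G(z)|_p=|P|$), whereas the paper writes $|C_G(z)^\circ|=|C_G(z):P|$ directly without spelling out this step.
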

	
	\begin{proof}
		Let $X$ denote the set of all nonidentity $p$-elements. Using the fact that each element of $G$ may be uniquely expressed as $g = zy = yz$ for a $p$-element $z$ and a $p$-regular element $y$, we have that
		
		$$
		|G|\langle \psi , \eta \rangle_G =  \sum_{g \in G} \psi(g)\ol{\eta(g)} = \sum_{z \in \{1\} \cup X} \sum_{y \in C_G(z)^\circ} \psi(zy)\ol{\eta(zy)}.
		$$ 
		Since each $C_G(z)$ has a normal $p$-complement, we have that $|C_G(z)^\circ| = |C_G(z):P|$ for $z \in X$. By Lemma \ref{2:1}, the right side of the above equation may be rewritten like so:
		
		$$
		\sum_{g \in G^\circ} \psi(g)\ol{\eta(g)} + \sum_{z \in  X} |C_G(z):P| \psi(z)\ol{\eta(z)}.
		$$
		Now, we may reindex the right sum above over $P^{\#}$ for some $P \in \Syl_p(G)$  since by Lemma \ref{2:2}, $X$ is a union of conjugacy classes which has $P^{\#}$ as a set of representatives:
		
		$$
		\sum_{z \in  X} |C_G(z):P| \psi(z)\ol{\eta(z)} = \sum_{z \in P^{\#}} |G:C_G(z)||C_G(z):P| \psi(z)\eta(z^{-1})
		$$ $$
		= \sum_{z \in P^{\#}} |G:P| \psi(z)\eta(z^{-1}),
		$$
		which yields \eqref{E:2}.
	\end{proof}
	
	We will require a number-theoretic lemma as well.
	
	\begin{lemma}\label{2:4}
		Let $\sum_{i = 1}^n \epsilon_i$ be a sum of $p$-power complex roots of unity. If this sum is equal to zero, then $n$ is a multiple of $p$.
	\end{lemma}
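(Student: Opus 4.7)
The plan is to reduce the sum to a divisibility statement about the cyclotomic polynomial $\Phi_{p^N}$ evaluated at $1$. Choose $N$ large enough so that each $\epsilon_i$ is a $p^N$-th root of unity; excluding the trivial case where every $\epsilon_i = 1$ (which immediately forces $n = 0$), we may assume $N \geq 1$ and fix a primitive $p^N$-th root of unity $\zeta$. Write $\epsilon_i = \zeta^{a_i}$ with $0 \leq a_i < p^N$, and define
\[
f(x) \;=\; \sum_{i=1}^{n} x^{a_i} \;\in\; \Z[x].
\]
Then $f(\zeta) = \sum \epsilon_i = 0$, so $\zeta$ is a root of $f$.

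Next I would invoke that the minimal polynomial of $\zeta$ over $\Q$ is the cyclotomic polynomial
\[
\Phi_{p^N}(x) \;=\; 1 + x^{p^{N-1}} + x^{2p^{N-1}} + \cdots + x^{(p-1)p^{N-1}},
\]
which is monic with integer coefficients. Since $\Phi_{p^N}$ divides $f$ in $\Q[x]$ and is monic, polynomial division in $\Z[x]$ yields $g(x) \in \Z[x]$ with $f(x) = g(x)\,\Phi_{p^N}(x)$.

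Finally, evaluate both sides at $x = 1$. The left side gives $f(1) = n$, and the right side gives $g(1) \cdot \Phi_{p^N}(1) = g(1) \cdot p$, where the equality $\Phi_{p^N}(1) = p$ is an elementary computation from the displayed formula above. Hence $n = p\,g(1)$, so $p \mid n$ as required.

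The argument has no real obstacle — the only subtlety is the passage from divisibility in $\Q[x]$ to divisibility in $\Z[x]$, which is handled by monicness of $\Phi_{p^N}$ via Gauss's lemma (or direct polynomial long division).
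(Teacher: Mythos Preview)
Your proof is correct, but it takes a genuinely different route from the paper's. The paper argues in one line: choose a maximal ideal $I$ of the ring of algebraic integers containing $p$; the residue field has characteristic $p$, and for any $p$-power root of unity $\epsilon$ one has $(\bar\epsilon - 1)^{p^k} = \bar\epsilon^{p^k} - 1 = 0$ there, so $\epsilon \equiv 1 \pmod I$. Summing gives $0 \equiv n \pmod I$, hence $p \mid n$. Your argument instead stays inside $\Z[x]$: you embed all the $\epsilon_i$ as powers of a single primitive $p^N$-th root $\zeta$, form the integer polynomial $f$ with $f(\zeta) = 0$, and read off $p \mid f(1) = n$ from $\Phi_{p^N} \mid f$ and $\Phi_{p^N}(1) = p$. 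Your approach is more elementary in that it uses only the irreducibility of $\Phi_{p^N}$ and monic polynomial division, avoiding any mention of prime ideals or residue fields; the paper's approach is shorter and more conceptual, and generalizes immediately (the same sentence shows, e.g., that any $\Z$-linear relation among $p$-power roots of unity has coefficient sum divisible by $p$). The two are really the same congruence viewed from opposite ends: your evaluation at $x=1$ is the polynomial shadow of the paper's reduction modulo a prime above $p$.
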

	
	\begin{proof}
		The lemma follows from the fact that for a maximal ideal $I$ of the algebraic integers containing $p$, $\epsilon \equiv 1 \Mod{I}$ for each $p$-power root of unity $\epsilon$.
	\end{proof}
	
	We record two final lemmas on the normal structure of $G$.
	
	\begin{lemma}\label{2:5}
		$O^p(G) = G$. 
	\end{lemma}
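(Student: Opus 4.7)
The strategy is to argue by contradiction: suppose $H := O^p(G)$ is a proper subgroup of $G$, so that $G/H$ is a nontrivial $p$-group. I would then try to apply the minimality of $G$ to $H$ and conclude that $H$ has a normal $p$-complement, which will be forced to be a normal $p$-complement of $G$ as well, contradicting the choice of $G$.

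The first step is to verify that $H$ inherits the hypothesis of Theorem \ref{T:1}. Since $G/H$ is a $p$-group, the Sylow subgroup $P$ surjects onto $G/H$, so $PH = G$ and $Q := P \cap H$ is a Sylow $p$-subgroup of $H$, with $|G:H| = |P:Q|$. To check $C_H(Q) = N_H(Q)$, take any $x \in N_H(Q)$; for each $a \in Q$, the sets $\{a\}$ and $\{a^x\} = \{a\}^x$ both lie inside $P$, so Lemma \ref{2:2} gives $a^x = a$, showing $x \in C_H(Q)$.

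Now $|H| < |G|$, so by the minimality of $G$ as a counterexample, $H$ has a normal $p$-complement $K$. Being the unique Hall $p'$-subgroup of $H$, $K$ is characteristic in $H$ and hence normal in $G$. Finally, the index calculation
\[
|G:K| = |G:H|\cdot |H:K| = \frac{|P|}{|Q|}\cdot |Q| = |P|
\]
shows that $G/K$ is a $p$-group, so $K$ is a normal $p$-complement of $G$, contradicting the assumption that $G$ is a counterexample.

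The only real subtlety is verifying that the hypothesis $C_G(P) = N_G(P)$ descends to $C_H(Q) = N_H(Q)$; once Lemma \ref{2:2} is applied, the remainder of the argument is a straightforward bookkeeping computation of Sylow indices.
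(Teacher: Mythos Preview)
Your proof is correct and follows essentially the same approach as the paper: assume $O^p(G)$ is proper, use Lemma~\ref{2:2} to check that the Sylow hypothesis descends to $O^p(G)$, apply minimality to get a normal $p$-complement there, and observe it is characteristic and hence a normal $p$-complement of $G$. You simply spell out in full the verification of $C_H(Q)=N_H(Q)$ and the index computation that the paper leaves to the reader.
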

	
	\begin{proof}
		Assume that $O^p(G)$ is proper, and let $P_0$ be a Sylow $p$-subgroup of $O^p(G)$. It follows from Lemma \ref{2:2} that the assumptions of Theorem \ref{T:1} hold for  $O^p(G)$ and $P_0$. By induction, $O^p(G)$ has a normal $p$-complement, but as this complement is characteristic in $O^p(G)$, it follows that $G$ itself has a normal $p$-complement, a contradiction.
	\end{proof}

	\begin{lemma}\label{2:6}
		Let $P \in \Syl_p(G)$ and $N \triangleleft\, G$. If $C_G(P) = N_G(P)$, then $$C_{G/N}(PN/N) = N_{G/N}(PN/N).$$ In particular, $p \nmid |Z(G)|$.
	\end{lemma}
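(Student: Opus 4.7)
The plan is to first show that the Burnside hypothesis passes to the quotient $G/N$ by a short Sylow transfer argument, and then deduce the ``in particular'' statement by invoking minimality of $G$ together with Schur--Zassenhaus. The key preliminary observation is that the hypothesis $C_G(P) = N_G(P)$ forces $P$ to be abelian: since $P \le N_G(P) = C_G(P)$, every element of $P$ centralizes every other element of $P$. Consequently the image $\overline{P} := PN/N \cong P/(P \cap N)$ is abelian, and it is clearly a Sylow $p$-subgroup of $G/N$ since $[G/N : \overline{P}] = [G : PN]$ divides $[G:P]$, which is coprime to $p$.

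For the first assertion, I would take $\overline{g} \in N_{G/N}(\overline{P})$, which is equivalent to $g \in N_G(PN)$, and then observe that $gPg^{-1}$ is a Sylow $p$-subgroup of $PN$. Sylow's theorem applied inside $PN$ produces $h \in PN$ with $h(gPg^{-1})h^{-1} = P$, i.e.\ $hg \in N_G(P) = C_G(P)$. Passing to $G/N$, the image $\overline{hg}$ centralizes $\overline{P}$. Since $\overline{h} \in \overline{P}$ and $\overline{P}$ is abelian, $\overline{h}$ also centralizes $\overline{P}$, hence $\overline{g} = \overline{h}^{-1}\overline{hg}$ does as well. This gives $N_{G/N}(\overline{P}) \subseteq C_{G/N}(\overline{P})$; the reverse containment is automatic.

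For the ``in particular'' part, I would argue by contradiction: suppose $p \mid |Z(G)|$ and let $Z_p$ denote the Sylow $p$-subgroup of $Z(G)$, which is characteristic in $G$ and nontrivial. By the first part applied with $N = Z_p$, the quotient $G/Z_p$ satisfies the Burnside hypothesis, so by minimality of the counterexample $G$, the quotient $G/Z_p$ admits a normal $p$-complement $K/Z_p$. Then $K \triangleleft G$ has $Z_p$ as its (central) Sylow $p$-subgroup, so Schur--Zassenhaus yields $K = Z_p \times L$ where $L$ is the set of $p$-regular elements of $K$; because $L$ is characteristic in $K$, it is normal in $G$. Since $[G:L] = [G:K] \cdot |Z_p|$ is a $p$-power and $|L|$ is coprime to $p$, $L$ is a normal $p$-complement of $G$, contradicting that $G$ is a counterexample.

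The only mildly delicate point is the first part, where one must be careful that the $h$ produced by Sylow's theorem lies in $PN$ (not just in $G$) so that its image lands in $\overline{P}$, where abelianness kills it. The ``in particular'' clause is then essentially routine bookkeeping with Schur--Zassenhaus.
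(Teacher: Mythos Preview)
Your proof is correct and follows essentially the same route as the paper: Sylow conjugation inside $PN$ for the first assertion, and minimality plus Schur--Zassenhaus for the second. The only cosmetic difference is that the paper refines the conjugating element to lie in $N$ itself (writing a $PN$-conjugator as $pn$ and using $P^p = P$), so that its image in $G/N$ is trivial, whereas you let it land in $\overline{P}$ and then kill it using the abelianness of $\overline{P}$; both devices accomplish the same thing.
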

	
	\begin{proof}
		The first statement is seen by showing that 	
		$$N_{G/N}(PN/N) \subseteq N_G(P)N/N = C_{G}(P)N/N \subseteq C_{G/N}(PN/N).$$ 
		Each containment except for the left is straightforward to verify. For this containment, let $xN \in N_{G/N}(PN/N)$. We have that $PN = (PN)^{xN} = P^xN$. We have then that $P, P^x \in \Syl_p(PN)$, so $P^x = P^n$ for some $n \in N$. Now, $xn^{-1} \in N_G(P)$, and it follows that $xN \in N_G(P)N/N$, as desired.
		
		Let $N$ be a nontrivial $p$-subgroup of $Z(G)$. For the second statement, the induction hypothesis and the previous paragraph imply then that $G/N$ has a normal $p$-complement, say $K/N$. By the Schur-Zassenhaus Theorem, $N$ has a complement in $K$, say $L$, which is normal in $K$ since $N$ centralizes $L$. Thus, $L$ is also a characteristic subgroup of $K$. $L$ is then a normal subgroup in $G$ with index $|P|$, so $L$ is a normal $p$-complement, a contradiction.
	\end{proof}

	\section{The Proof of the Theorem}

	If $G$ is a minimal counterexample, it is possible to choose a nontrivial character $\zeta$ from the principal block of $G$. Moreover, we can choose such a character having height zero. Assume from now on that $\zeta$ is such a character. In particular, $\zeta$ cannot vanish on any $p$-element by Lemma \ref{2:4}. 
	
	Lemma \ref{2:6} implies that $C_G(z)$ is proper for each nonidentity $p$-element, and so, each such $C_G(z)$ has a normal $p$-complement by induction. From Lemmas \ref{2:2} and \ref{2:3} with $\zeta = \psi = \eta$, we obtain
	
	\begin{equation}\label{E:3}
		|G| = |G|\la \zeta , \zeta \ra_G \geq |G:P|\sum_{z \in P^{\#}} |\zeta(z)|^2.
	\end{equation}
	Now, the average over Galois conjugates of a totally positive algebraic integer is at least 1 as a consequence of the AM-GM inequality. The sum in \eqref{E:3} is a sum of nonzero, totally positive algebraic integers, and the terms are permuted by the Galois action. Thus, the average among all terms is at least 1, and
	
	\begin{equation}\label{E:4}
		\sum_{z \in P^{\#}} |\zeta(z)|^2 \geq |P| - 1.
	\end{equation}
	
	Since a character value has absolute value 1 iff it is a root of unity by a theorem of Kronecker, Siegel's theorem implies that equality holds in \eqref{E:4} precisely when $\zeta(z)$ is a root of unity for all $z \in P^{\#}$. Otherwise, if the inequality in \eqref{E:4} is strict, the sum must be at least $|P|$ since this sum is a rational integer. If this were the case, Lemma \ref{2:3} then implies that $\zeta$ vanishes on $G^\circ$. In particular, $\zeta(1) = 0$, an impossibility. Thus, $\zeta$ takes root of unity values on $P^{\#}$, and
	
	$$
	\sum_{z \in P^{\#}} |\zeta(z)|^2 = |P| - 1.
	$$
	Combining the above equation with Lemma \ref{2:3} gives
	
	\begin{equation}\label{E:6}
		|G:P| = \sum_{g \in G^\circ} |\zeta(g)|^2.
	\end{equation}
	We now consider $\la \zeta , 1_G \ra_G$ for which Lemma \ref{2:3} yields
	
	$$
	0 = |G| \la \zeta , 1_G \ra_G = \sum_{g \in G^\circ} \zeta(g) + |G:P|\sum_{z \in P^{\#}} \zeta(z).
	$$
	Set $a = \sum_{z \in P^{\#}}\zeta(z)$, and notice that $a$ is a rational integer. Also, $a \neq 0$ by Lemma \ref{2:4} since $a$ is expressed as a sum of $|P|$th roots of unity having $(|P| - 1)\zeta(1)$ terms. We may rewrite the above equation as
	
	\begin{equation}\label{E:7}
		-|G:P|a = \sum_{g \in G^\circ}\zeta(g).
	\end{equation}
	Also, by considering $\la 1_G , 1_G \ra_G$, Lemma \ref{2:3} yields
	
	$$
	|G| = |G^\circ| + |G:P|(|P| - 1),
	$$
	so in particular,
	\begin{equation}\label{E:8}
		|G:P| = |G^\circ|.
	\end{equation}
	
	We will use \eqref{E:6}, \eqref{E:7}, and \eqref{E:8} to derive a contradiction. The triangle inequality along with \eqref{E:7} and \eqref{E:8} yields
	\begin{equation}\label{E:9}
		\sum_{g \in G^\circ} |\zeta(g)| \geq \big|\sum_{g \in G^\circ} \zeta(g)\big| = |G^\circ||a| \geq |G^\circ|.
	\end{equation}
	Also, as a consequence of \eqref{E:6}, \eqref{E:8}, and the Cauchy--Schwarz inequality (see \cite[Lemma 4.10]{Is} for our formulation), we have that
	
	\begin{equation}\label{E:10}
		|G^\circ| = \sum_{g \in G^\circ} |\zeta(g)|^2 \geq \dfrac{1}{|G^\circ|}\bigg(\sum_{g \in G^\circ} |\zeta(g)|\bigg)^2 \geq \sum_{g \in G^\circ} |\zeta(g)|
	\end{equation}
	The rightmost inequality in \eqref{E:10} holds since $\sum_{g \in G^\circ} |\zeta(g)| \geq |G^\circ|$ by \eqref{E:9}. Combining \eqref{E:9} and \eqref{E:10} yields that equality holds throughout. Namely, we obtain $|a| = 1$ and
	
	$$
	\sum_{g \in G^\circ} |\zeta(g)| = \big|\sum_{g \in G^\circ} \zeta(g)\big|.
	$$
	Since $\zeta(1) > 0$, the triangle inequality allows us to conclude that $\zeta(g)$ is real and nonnegative for all $g \in G^\circ$.  Comparing with \eqref{E:7}, we see that $a = -1$. 
	
	Using \eqref{E:6}, \eqref{E:7}, \eqref{E:8}, and Lemma \ref{2:3}, we have
	
	$$
	\sum_{g \in G^\circ}|\zeta(g) - 1|^2 = \sum_{g \in G^\circ} \bigg[|\zeta(g)|^2 -\zeta(g) - \ol{\zeta(g)} + 1\bigg] 
	$$ $$
	= |G^\circ| -|G^\circ| - |G^\circ| + |G^\circ| = 0.
	$$
	This implies that $\zeta(g) = 1$ for all $g \in G^\circ$, so $\zeta$ is a linear character containing $G^\circ$ in its kernel, a contradiction via Lemma \ref{2:5}.

	
\end{document}